\theoremstyle{theorem}
\newtheorem{thm}{Theorem}[]
\newtheorem*{thm*}{Theorem}
\newtheorem{lem}[thm]{Lemma}
\begin{document}

\title       {Stable lengths on the pants graph are rational.}
\author   {Ingrid Irmer}
\maketitle

\begin{abstract} For the pants graph, there is little known about the behaviour of geodesics, as opposed to quasigeodesics.  Brock-Masur-Minsky showed that geodesics or geodesic segments connecting endpoints satisfying a bounded combinatorics condition, such as the stable/unstable laminations of a pseudo-Anosov, all have bounded combinatorics, \textit{outside of annuli}. In this paper it is shown that there exist geodesics that also have bounded combinatorics within annuli. These geodesics are shown to have finiteness properties analogous to those of tight geodesics in the complex of curves, from which rationality of stable lengths of pseudo-Anosovs acting on the pants graph then follows from the arguments of Bowditch for the curve complex.
\end{abstract}

\section{Introduction}
Suppose $S$ is a closed, orientable, connected surface with genus at least 2, and let $C_{p}(S)$ be the pants graph of $S$. The vertices of $C_{p}(S)$ represent pants decompositions of $S$, with edges between vertices representing pants decompositions whose component curves intersect minimally. A precise definition is given in section \ref{defns}.\\

Let $g$ be a pseudo-Anosov element of the mapping class group Mod($S$) of $S$, with unstable limit point $b$ and stable limit point $e$. The limit points $b$ and $e$ are laminations that satisfy the $K$-bounded combinatorics condition defined in \cite{BMM}, see section \ref{defns}. Since the pants graph is neither hyperbolic nor relatively hyperbolic relative to any nontrivial collection of subsets, \cite{BDM}, in general there is no geodesic stability. The $K$-bounded combinatorics condition is used in \cite{BMM} to make sense of a boundary at infinity.\\

The \textit{stable length} of $g$ is defined to be 
\begin{equation*}
\lim_{n\rightarrow \infty}\frac{d(v,g^{n}v)}{n}
\end{equation*}
where $d(*,*)$ is the usual combinatorial distance on $C_{P}(S)$ obtained by assigning each edge length one, and $v$ is any vertex of $C_{P}(S)$. Since the mapping class group acts by isometry, it is not hard to see that this quantity is locally constant, and hence independent of $v$, due to connectedness of $C_{P}(S)$.\\

The Nielsen-Thurston classification of mapping classes states that every mapping class is either pseudo-Anosov, periodic or reducible. These categories are known to have many properties in common with hyperbolic, elliptic and parabolic isometries, respectively, of the hyperbolic plane. One property of a hyperbolic isometry $h$ acting on the complex plane $\mathbb{H}^{2}$ is that the hyperbolic isometry leaves invariant a geodesic connecting its limit points at infinity; the \textit{axis} of $h$. Stable length of a pseudo-Anosov on $C_{P}(S)$ is shown to be rational by constructing a geodesic in $C_{P}(S)$ left invariant by some power of the pseudo-Anosov.\\

It was shown in \cite{MasurandMinskyI} that the stable length of a pseudo-Anosov acting on Harvey's complex of curves is nonzero. Since the stable length of a pseudo-Anosov acting on the pants graph is no less than its stable length acting on the complex of curves, it follows that this must also be nonzero. \\

In this paper it is shown that
\begin{thm}
\label{maintheorem}
Any pseudo-Anosov $g$ has rational stable length on $C_{P}(S)$.
\end{thm}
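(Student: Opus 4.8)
The plan is to follow the strategy Bowditch used for the curve complex: rather than estimating $d(v,g^{n}v)$ directly, I would produce a bi-infinite geodesic in $C_{P}(S)$ invariant under a power of $g$ and read off the stable length from its combinatorics. Concretely, suppose I can exhibit an integer $m\geq 1$, a bi-infinite geodesic $\gamma\colon\mathbb{Z}\to C_{P}(S)$, and an integer $n\geq 1$ with $g^{m}\cdot\gamma(i)=\gamma(i+n)$ for all $i$. Since $\gamma$ is a genuine geodesic, $d(\gamma(0),g^{mk}\gamma(0))=d(\gamma(0),\gamma(kn))=kn$ exactly, so with $v=\gamma(0)$,
\begin{equation*}
\lim_{k\to\infty}\frac{d(v,g^{mk}v)}{mk}=\frac{n}{m}.
\end{equation*}
As the stable length is a genuine limit independent of the basepoint, it must equal $n/m\in\mathbb{Q}$, giving the theorem. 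That $g^{m}$ acts as a translation rather than a reflection is automatic: $g$ fixes $b$ and $e$ as the two distinct ideal endpoints of $\gamma$, so it cannot interchange the two ends of the line.

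The real content is therefore to produce such an invariant geodesic, and this is where the geodesics built in the preceding sections enter. By those constructions there is a nonempty family $\mathcal{G}$ of bi-infinite geodesics connecting the unstable lamination $b$ to the stable lamination $e$, each with uniformly bounded combinatorics both outside and inside annuli; the former is the content of \cite{BMM}, while bounded combinatorics \emph{inside} annuli is the new ingredient. Because $g$ is a mapping class fixing both $b$ and $e$ and acting on $C_{P}(S)$ by isometries, it carries $\mathcal{G}$ into itself. The property I would lean on is the finiteness established earlier, analogous to the finiteness of tight geodesics in the curve complex: only boundedly many vertices of members of $\mathcal{G}$ meet any fixed bounded neighbourhood of a vertex, so that $\mathcal{G}$ is effectively finite modulo the action of $\langle g\rangle$.

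With this finiteness in hand, the extraction of the invariant geodesic is a pigeonhole argument of exactly Bowditch's form: the powers $g^{j}$ permute a finite collection of members of $\mathcal{G}$ lying closest to a fixed vertex, so some $g^{m}$ stabilises a single $\gamma\in\mathcal{G}$, and, fixing both ends, acts on it as translation by an integer $n\geq 1$; the computation above then finishes the argument. I expect the finiteness input to be the sole obstacle, and it is precisely here that the failure of the pants graph to be hyperbolic or relatively hyperbolic, recorded in \cite{BDM}, bites: one cannot import Bowditch's coarse geometry directly, and controlling the combinatorics inside annuli is exactly what is engineered to recover a usable substitute. Once that finiteness is available, everything downstream is a formal repetition of Bowditch's reasoning.
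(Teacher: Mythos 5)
Your closing computation is fine and matches the paper's: once one has a bi-infinite geodesic from $b$ to $e$ invariant under $g^{m}$ acting as translation by $n$, the stable length is $n/m\in\mathbb{Q}$. The genuine gap is that you attribute the two crucial inputs --- the existence of a nonempty family of geodesics connecting $b$ to $e$ with bounded combinatorics inside annuli, and its finiteness modulo $\langle g\rangle$ --- to ``the preceding sections,'' but the paper's preceding sections prove neither. They only prove finiteness of $K$ minimising geodesics between two \emph{vertices}, and explicitly state that it is not yet clear that $K$ minimising geodesics connecting the two laminations $b$ and $e$ even exist. Producing exactly these two facts is the actual content of the paper's proof of the theorem: first, Lemma \ref{lemma}, which shows that if a geodesic in $\mathcal{F}(b,e)$ passes through a curve $c_{n}$ that is twisted $n$ times around a curve $c$, with $n$ large, then both rays on either side of that vertex must themselves pass through $c$; this is what makes the untwisting move legitimate, since the mapping class $T_{c}^{-n}$ fixes the two vertices containing $c$ and so replacing the intermediate segment by its $T_{c}^{-n}$-image yields another genuine geodesic, eventually landing in $\mathcal{F}_{K}(b,e)$. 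Second, the iterative estimate showing that bounded combinatorics forces every curve on such a geodesic to have bounded length in $\tilde{M}$, which is what ties vertices to short curves and yields finitely many edges modulo $\langle g\rangle$. None of this appears in your proposal; you have assumed precisely the theorem's hard part.

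A secondary flaw is your extraction of the axis. Finitely many geodesics modulo $\langle g\rangle$ does not by itself produce a geodesic invariant under a power of $g$: the action of $\mathbb{Z}$ on itself by translation has a single orbit and no element fixed by any nontrivial power. Your proposed fix --- pigeonholing the finite collection of geodesics ``lying closest to a fixed vertex'' --- fails because that collection is not $g$-invariant ($g$ moves the vertex). What is actually needed, and what the paper sketches following Bowditch and Delzant, is to number the finitely many edge-orbits and pass to the lexicographically least geodesics: that subfamily is finite, nonempty, and genuinely closed under $g$, and only then does pigeonhole give a power of $g$ stabilising a single geodesic, which serves as the axis.
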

answering a question of Benson Farb, \cite{conversation}.\\


The major difficulty in working with curve complexes is that, apart from exceptional cases, they are not locally compact. In \cite{MasurandMinskyII} the notion of a tight geodesic was defined, in order to get around this problem. In \cite{Tight}, a slightly modified definition of tightness is given, which is the one given in section \ref{defns}.\\

To prove theorem \ref{maintheorem}, we would like an analogue of the notion of tight geodesics for the pants graph. This is defined in subsection \ref{greedy}. Essentially, these are the geodesics that pass through subsurfaces in the most convex way possible.\\


Let $M$ be the mapping torus of the pseudo-Anosov $g$, with its unique, finite volume hyperbolic metric (the existence of which follows from Mostov rigidity and \cite{Thurston}), and let $\tilde{M}\equiv S\times \mathbb{R}$ be the infinite cyclic covering space corresponding to the fiber. Fix an inclusion of $S$ into $\tilde{M}$, and identify curves on $S$ with curves in $\tilde{M}$. The \textit{length} of a curve is then the length of its geodesic representative in $\tilde{M}$. The finiteness properties of tight geodesics used in \cite{Bowditch} to prove rationality of stable length of a pseudo-Anosov acting on the complex of curves came from relating vertices of tight geodesics to short curves in $\tilde{M}$. The important property of these short curves being that there are only finitely many orbits of short curves under the action of $\langle g \rangle$.\\

Since edges only exist between pants decompositions that intersect minimally, increasing intersection number is related to increased distance in the pants graph, although in some complicated way. The basic idea of the proof of theorem \ref{maintheorem} is to use the fact that short curves minimise intersection number for a given curve complex distance. It follows that vertices on pants graph geodesics represent pants decompositions that tend to contain short curves, from which the finiteness properties underpinning theorem \ref{maintheorem} then follow from the arguments in \cite{Bowditch}.\\

\subsection{Acknowlegements} Thanks to Dan Margalit for telling me about this problem, and for his patience and helpful comments. Thanks also to Jason Behrstock for pointing out the reference \cite{BMM}, to ICERM's hospitality while writing this, and to Cyril Lecuire, Hugo Parlier and Saul Schleimer for helpful discussions. This work was funded by a MOE AcRF-Tier 2 WBS grant Number R-146-000-143-112.

\subsection{Some standard definitions relating to curve complexes}\label{defns}
A \textit{curve} is an isotopy class of maps of $S^{1}$ into the manifold in question; here either the surface $S$ or the 3-manifold $\tilde{M}$. A curve will often be confused with the image of a particular representative of the isotopy class. When the (sub)surface has punctures or nonempty boundary, it will also be assumed that the curve is not homotopic onto the boundary or into a puncture. All curves are assumed to be simple, where intersection numbers of curves in $\tilde{M}$ are defined by projecting onto the image of a $\pi_{1}$ injective embedding of the surface $S$ into $\tilde{M}$.\\


\textbf{Pants Graph}. The \textit{pants graph} $C_{P}(S)$ is the graph defined by Hatcher-Thurston with vertex set consisting of isotopy classes of pants decompositions of the surfaces $S$. Two vertices are connected by an edge if they represent pants decompositions that can be connected by a so-called \textit{elementary move}. An elementary move takes a curve $c_1$ in a pants decomposition $p_1$ and replaces it with another curve $c_2$, such that $(p_{1}\setminus c_{1})\cup c_{2}$ is a new pants decomposition, and $c_1$ and $c_2$ intersect minimally in the component of $S\setminus (p_{1}\setminus c_{1})$ into which they can both be isotoped.\\

A geodesic in the pants graph will be said to \textit{pass through a curve $c$}, or alternatively, the curve $c$ will be said to be \textit{on the geodesic} if there is a vertex of the geodesic representing a pants decomposition containing the curve $c$.\\

\textbf{Curve complexes.} A complex of curves will be defined for the surface $S$, and also, in order to define subsurface projections, curve complexes for subsurfaces of $S$. Suppose $S_{g,p}$ is a surface with genus $g$ and $p$ punctures. Except for the annulus, the set of curves on the subsurface $S_{g,p}$ define the vertices of Harvey's complex of curves, $C(S_{g,p})$. Whenever $3g+p>4$, two or more vertices of the curve complex $C(S_{g,p})$ span a simplex if the curves they represent can be realised disjointly. For all other subsurfaces of interest except the annulus, namely the four punctured sphere and the once punctured torus, $C(S_{g,p})$ has an edge connecting any pair of vertices representing curves that intersect minimally. For the annulus, the definition of $C(S_{g,p})$ needs to be approached differently, and is given in section 2.4 of \cite{MasurandMinskyII}.\\






Distances in curve complexes are defined by assigning each edge length one.\\

The notion of \textit{subsurface projection} was defined in \cite{MasurandMinskyII}, as a means of breaking down curve complex problems into simpler pieces. Let $Y$ be an incompressible, nonperipheral connected open subsurface of $S$. If $c$ is a curve the intersects $Y$, the \textit{subsurface projection of $c$ to $Y$} is defined to be a union of curves in $Y$ obtained by surgering each arc of $c \cap Y$ along the boundary of $Y$. Since there are some choices involved, in \cite{MasurandMinskyII} it is shown that subsurface projections are coarsely well defined. \\

The \textit{distance in the subsurface projection to $Y$} of the vertices $v_1$ and $v_2$ of $C(S)$, $d_{Y}(v_{1},v_{2})$, is zero when one or both of the vertices represent curves that can be isotoped out of $Y$. Otherwise, it is the distance in $C(Y)$ of the subsurface projections to $Y$ of the vertices $v_1$ and $v_2$, which is shown to be coarsely well defined. The subsurface projection to an annulus with core curve $c$ basically counts the number of times a curve is Dehn twisted around $c$ relative to another curve, and is quasi-isometric to $\mathbb{Z}$.\\

\textbf{Bounded Combinatorics}. Suppose $v_1$ and $v_2$ are vertices of $C(S)$ or stable/unstable laminations of a pseudo-Anosov. For any incompressible, connected open subsurface $Y$ of $S$, $d_{Y}(v_{1},v_{2})$ is defined as above. The pair $(v_{1},v_{2})$ is said to have $K$ \textit{bounded combinatorics}, \cite{BMM}, if there is an upper bound $K$ on the distance in the subsurface projection between $v_{1}$ and $v_{2}$ to all $Y$. The concept of $K$ bounded combinatorics is defined analogously for vertices of $C_{P}(S)$\\

A path in $C(S)$ or $C_{P}(S)$ will be said to be $K$ \textit{bounded} if this is true for all pairs of vertices through which it passes.\\



Let $\mathcal{F}(b,e)$ be the graph of geodesics in $C_{P}(S)$ connecting $b$ to $e$, and $\mathcal{F}_{K}(b,e)$ be the (possibly empty) subgraph of $K$ bounded geodesics.\\


\subsection{A locally finite subgraph of $\mathcal{F}(b,e)$}\label{greedy}
The aim is to find a locally finite subgraph of $\mathcal{F}(b,e)$ closed under the action of $g$. By \textit{locally finite} is meant that there are only finitely many geodesics in the subgraph connecting any two vertices. \\

Let $p_1$ and $p_2$ be two pants decompositions. Since any two pants decompositions represent vertices in $C_{P}(S)$ at finite distance, $p_1$ and $p_2$ have $K_1$ bounded combinatorics, for some $K_{1}>0$. Any two vertices along any pants geodesics connecting $p_1$ to $p_2$ necessarily have bounded conbinatorics, since the geodesics have finite length. A geodesic for which the supremum of $K$, taken over all pairs of vertices, is minimised, will be called a $K$ \textit{minimising geodesic}.\\

To see that there are only finitely many $K$ minimising geodesics connecting the vertices $v_1$ and $v_2$ corresponding to the pants decompositions $p_1$ and $p_2$, note that the $K$ bounded combinatorics condition restrains the number of twists that an elementary move can perform within a one holed torus or a four punctured sphere. Starting at the vertex $v_1$, there are finitely many edges emerging from $v_1$ that a $K$ bounded geodesic connecting $v_1$ to $v_2$ might take. From each of the endpoints of these finite number of edges, again there are only finitely many edges that a $K$ bounded geodesic might take, etc. The number of $K$ bounded geodesics is bounded from above by some constant $C$ raised to the power $d(v_{1},v_{2})$.\\

Since the action of the mapping class group on $C_{P}(S)$ preserves distance in subsurface projections, any mapping class maps $K$ minimising geodesic segments to $K$ minimising geodesic segments.\\

Suppose now that $p_1$ and $p_2$ are no longer pants decompositions, but a pair of laminations satisfying the $K$ bounded combinatorics condition, such as the limit points of a pseudo-Anosov. It is not yet clear that $K$ minimising geodesics connecting these two boundary points exist.\\

\subsection{Short curves and invariant quasigeodesics}
The pants graph is known to be quasi-isometric to Teichm\"uller space with the Weil-Petersson metric, \cite{Quasi}. Since $\tilde{M}$ has injectivity radius bounded from below, it follows from \cite{Minsky} that $\tilde{M}$ determines a family, $\mathcal{Q}_{2L}(b,e)$, of quasigeodesics in $C_{P}(S)$ connecting $b$ to $e$. The elements of $\mathcal{Q}_{2L}(b,e)$ only pass through short curves in $\tilde{M}$. In this context ``short'' means ``length on the order of magnitude of Bers' constant $L$'', for example, we make the arbitrary chioce, less than $2L$. Since $g$ acts by isometry, both on $C_{P}(S)$ and on $\tilde{M}$, $\mathcal{Q}_{2L}(b,e)$ is closed under the action of $g$. It then follows from exactly the same argument given in \cite{Bowditch}, that there is a quasigeodesic $Q$ in $\mathcal{Q}_{2L}(b,e)$ invariant under $g^m$ for some $m$. For the sake of completeness, this argument is sketched below.\\

\textbf{Bowditch argument.} Let $\mathcal{G}(b,e)$ be the graph of tight, directed geodesics in $C(S)$ connecting $b$ to $e$, and let $E$ be the set of directed edges of $\mathcal{G}(b,e)$. The mapping class group maps tight geodesics to tight geodesics, so $\mathcal{G}(b,e)$ is closed under the action of $g$. The set $E/\langle g \rangle$ is shown to be finite by using an argument from \cite{Minsky} to relate vertices of tight geodesics in $\mathcal{G}(b,e)$ to short curves in $\tilde{M}$, of which there are only finitely many modulo the action of $g$.\\

It may not be the case that all geodesics contained in $\mathcal{G}(b,e)$ are tight, so let $\mathcal{L}(b,e)$ be the set of all geodesics contained in $\mathcal{G}(b,e)$.\\

To obtain a geodesic invariant under some power of $g$, it remains to show that the finite number of elements of $E/\langle g \rangle$ can be connected up to form a geodesic in a way that is not completely random. This is shown using an argument attributed to Delzant: assign each of the finite elements of $E/\langle g \rangle$ a number. A geodesic $\gamma$ in $\mathcal{L}(b,e)$ will be said to be \textit{lexicographically least} for all vertices $v,w$ of $\gamma$ if the sequence of labels of directed edges in the segment of $\gamma$ connecting $v$ to $w$ is lexicographically least amongst all geodesic segments in $\mathcal{G}(b,e)$ connecting $v$ to $w$. Let $\mathcal{L}_{L}(b,e)$ be the subgraph of lexicographically least geodesics in $\mathcal{G}(b,e)$. It is shown that:

\begin{itemize}
\item{$\mathcal{L}_{L}(b,e)$ is nonempty}
\item{$\mathcal{L}_{L}(b,e)$ is closed under the action of $g$}
\item{$\mathcal{L}_{L}(b,e)$ contains finitely many elements}
\end{itemize}

Since there is a finite, nonempty, set of geodesics connecting $b$ to $e$, closed under the action of $g$, it follows that some finite power of $g$ has an axis $\Box$\\

\textbf{Defining twists.} Using the quasi-isometry between the pants graph and Teichm\"uller space, it is possible to define an approximate notion of the number of Dehn twists performed by an elementary move. Alternatively, since a geodesic $\gamma$ in $\mathcal{F}(b,e)$ is within a bounded distance of $Q$ (theorem 4.4 of \cite{BMM}), and the short curves in $\tilde{M}$ satisfy the $K$-bounded combinatorics condition, lengths of curves in $\tilde{M}$ could be used. This is the approach taken here. Let $s_{L,c}$ be the set of all curves in $\tilde{M}$ of length less than twice Bers' constant that intersect $c$. A curve will be said to be \textit{twisted around $c$ at least $n$ times} if it has distance at least $n$ from any curve in $s_{L,c}$ in the subsurface projection to the annulus with core curve $c$.\\

\textbf{Basic Problem.} Although the elements of $\mathcal{F}(b,e)$ fellow travel $Q$, since this is not the marking graph, what might happen is that every geodesic in $\mathcal{F}(b,e)$ passes through curves whose length approaches infinity. In this case, the geodesic segments connecting points of $Q$ to points of $\mathcal{F}(b,e)$ perform arbitrarily large numbers of twists, taking short curves to long curves. Since $g$ acts by isometry both on $\tilde{M}$ and $C_{P}(S)$, if $g$ is to have an axis, it is necessary to rule out the possibility that all geodesics in $\mathcal{F}(b,e)$ pass through arbitrarily long curves.\\

\subsection{Short curves in the pants complex}
We finally have all the ingredients to start the proof of theorem 1.

\begin{proof}

Suppose $\gamma$ is a geodesic in $\mathcal{F}(b,e)$ passing through the curve $c_n$,  where $c_n$ is long because it has been twisted around a curve $c$ at least $n$ times. Cut the geodesic $\gamma$ at the vertex $v$ to obtain two rays; one connecting $v$ to $e$, call it $r_1$, and the other connecting $b$ to $v$, $r_2$. 

\begin{lem}\label{lemma}
If $n$ is sufficiently large, i.e. greater than a constant depending on $g$, both $r_1$ and $r_2$ have to pass through $c$. 
\end{lem}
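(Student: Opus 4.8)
The plan is to recast the statement in terms of the annular subsurface projection to $A_c$, the annulus with core $c$, and to show that a geodesic ray carrying the heavily twisted curve $c_n$ cannot reach the untwisted endpoints without pinching $c$. First I would record the two facts that drive everything. On the one hand, by the definition of ``twisted around $c$ at least $n$ times'' we have $d_{A_c}(c_n, s_{L,c}) \geq n$, so the vertex $v$ is twisted at least $n$ times around $c$. On the other hand, the endpoints are untwisted around $c$: because $b$ and $e$ satisfy the $K$-bounded combinatorics condition and the elements of $s_{L,c}$ are short in $\tilde{M}$ (hence cannot wind around $c$), both $b$ and $e$ project to within a bounded distance $B_0$ of $s_{L,c}$ in $C(A_c)$. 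Consequently $d_{A_c}(v,e) \geq n - B_0$ and $d_{A_c}(v,b) \geq n - B_0$. Reversing the orientation of $\gamma$ interchanges $r_1$ and $r_2$, so it suffices to treat $r_1$.

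The second ingredient is a coarse-Lipschitz estimate for the annular projection along paths that avoid $c$. Here I would use the elementary but crucial observation that a pants decomposition not containing $c$ must contain a curve crossing $c$, since a maximal multicurve disjoint from $c$ would be forced to include $c$ itself. Hence along any $c$-avoiding path in $C_P(S)$ the projection to $A_c$ is defined at every vertex, and since an elementary move alters only one curve, of bounded intersection with its replacement, $d_{A_c}$ changes by at most a fixed constant $c_0$ per edge. Therefore a $c$-avoiding segment of $r_1$ from $v$ to a vertex $y$ has pants length at least $d_{A_c}(v,y)/c_0$.

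The heart of the argument is then a length comparison showing that unwinding the twist without pinching $c$ is wasteful, hence non-geodesic. Suppose $r_1$ avoids $c$ and let $y$ be the first vertex of $r_1$ that is untwisted around $c$, i.e. with $d_{A_c}(y, s_{L,c}) \leq B_0$; such a $y$ exists because the vertices of $r_1$ sufficiently close to $e$ are untwisted, $e$ itself being untwisted. By the Lipschitz estimate the segment $[v,y]$ has length at least $(n - 2B_0)/c_0$, and reducing $d_{A_c}$ by this amount forces at least that many edges, each acting on a curve crossing $c$, that is, inside the one-holed torus or four-punctured sphere $W$ carrying the twist. Yet the \emph{net} change these moves produce in $W$ is bounded: $v$ and $y$ are both untwisted around $c$, and in $C(W)$ the twisted and untwisted curves are separated only by the hub vertex $c$. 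I would then replace this long, nearly closed excursion in $W$ by the bounded detour through $c$ (pinch $c$, realize the bounded net change, unpinch), while leaving untouched the moves of $\gamma$ that make genuine pseudo-Anosov progress in subsurfaces disjoint from $W$. For $n$ larger than a constant depending only on $c_0$, $B_0$ and the fellow-traveling constant of theorem 4.4 of \cite{BMM} (all of which depend only on $g$), the resulting path from $v$ to $y$ is strictly shorter than $[v,y]$, contradicting that $r_1$ is geodesic. The same argument applied to the reversed ray forces $r_2$ through $c$.

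The step I expect to be the main obstacle is this last replacement. The subsurface $W$ carrying the twist is cut out by the pants curves complementary to $c_n$, and these curves themselves change as $\gamma$ advances toward $e$, so $W$ is not literally constant along $[v,y]$; making the surgery rigorous requires separating the many twisting moves from the few progress moves, commuting them past one another using that moves supported in disjoint subsurfaces commute, and bounding the cost of inserting and deleting $c$ from a configuration that is untwisted around $c$ but otherwise unconstrained. Controlling this interaction, equivalently showing that the annular excursion and the transverse progress contribute essentially independently to pants length, is where the real work lies, and is the pants-graph analogue of the bounded geodesic image phenomenon for annuli in the complex of curves.
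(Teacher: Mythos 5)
Your first two steps coincide with the paper's: the observation that $d_{A_c}(v,b)$ and $d_{A_c}(v,e)$ are both at least $n$ minus a constant, and the coarse-Lipschitz estimate along $c$-avoiding paths, are exactly the content of the ``Fact'' in the paper's proof (an elementary move not passing through $c$ changes the twisting about $c$ by at most one or two). Where you diverge is the finishing move, and it is there that your argument has a gap beyond the one you flag. Your contradiction rests on the competitor path from $v$ to $y$ (pinch $c$, realize the bounded net change, unpinch, retaining the genuine progress moves) being strictly shorter than the segment $[v,y]$ of $r_1$. But the competitor must still contain all of the non-annular progress between $v$ and $y$, so its length is roughly (progress) $+$ (bounded detour), while the Lipschitz estimate only gives that $[v,y]$ has length at least $(n-2B_0)/c_0$. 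If the geodesic amortizes the untwisting over a very long stretch --- two clicks of the annular projection per move, interleaved with progress it is obliged to make anyway --- then the length of $[v,y]$ is dominated by the progress term, and the competitor is \emph{not} shorter. Nothing in your proposal bounds the length of the window $[v,y]$ in which the untwisting occurs independently of $n$, and without such a bound the surgery yields no contradiction.

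This missing bound is precisely what the paper's constant supplies, and it is why the constant depends on $g$: taking $N$ to be twice the stable length of $g^k$, where $k$ is chosen so that $g^k$ moves every curve at least distance $3$ in $C(S)$, the paper uses the fact that $\gamma$ fellow-travels the $\langle g\rangle$-periodic quasigeodesic $Q$, so that within $N/2$ elementary moves of $v$ the shadow of $\gamma$ in the curve complex has advanced far enough that (by bounded-geodesic-image-type considerations) the projection to $A_c$ must already be controlled. Hence the entire reduction from $n$ to a bounded quantity must be completed within $N/2$ moves at rate at most two per move, which is impossible once $n>N$. In other words, the paper never performs your surgery inside the proof of the lemma; the replacement through $T_c^{-n}$ appears only \emph{afterwards}, once the lemma has produced vertices containing $c$ on both rays. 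Your self-identified obstacle --- that the subsurface $W$ carrying the twist drifts along $[v,y]$ and the moves do not literally commute --- is real, but it is secondary: even granting that surgery, you still need the a priori confinement of the annular excursion to a window of length bounded in terms of $g$, and that is the ingredient to add.
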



\begin{proof}
By assumption, every geodesic connecting $v$ to the nearest point(s) on $Q$ performs a large number of twists around the curve $c$. Clearly, this can not be the case for every vertex of $\gamma$, because the limit points $b$ and $e$ have $K$ bounded combinatorics. It follows that $\gamma$ necessarily passes through curves that are not twisted a large number of times around $c$ and have arbitrarily large intersection number with $c$.\\


Fact - Any curve that intersects $c_n$ minimally within a four punctured sphere or a once punctured torus either does not pass through the annulus with core curve $c$, or is twisted almost as many times as $c_n$. Similarly, if two curves are disjoint and both pass through the annulus with core curve $c$, the number of twists around $c$ can only differ by at most one.\\

Suppose $r_1$ does not pass through $c$. How does $r_1$ get from $c_n$ to any curve on $r_1$ that is not twisted a large number of times around $c$ and intersects $c$ more than twice?\\

Suppose $c$ intersects more than one curve in the pants decomposition corresponding to $v$. By the previous fact, in order to reach a pants decomposition that does not have a large subsurface projection to the annulus with core curve $c$, an elementary move at the vertex $v$ can not decrease the number of twists by more than one. An elementary move might increase/decrease by one the number of curves in the pants decomposition passing through the annulus with core curve $c$.\\

If $c$ only intersects one curve, i.e. $c_n$, in the pants decomposition corresponding to $v$, then $c$ and $c_n$ fill the subsurface in which elementary moves involving $c_n$ can occur. So an elementary move can undo the twists at most two at a time, or increase/decrease the intersection number of the pants decomposition with $c$, for example, via an elementary move that twists $c$ around $c_{n}$. However, it can not decrease the intersection number of the pants decomposition with $c$ to 0, because $c$ is the only nontrivial, nonperipheral curve in the 1-holed torus or 4-holed sphere in question disjoint from $c$.\\




For $n$ sufficiently large, an element of $\mathcal{F}(b,e)$ can not afford to undo the twists one or two at a time. By ``sufficiently large'', it suffices to take $n>N$. Here $N$ is twice the stable length of $g^k$, where $k$ is large enough to ensure that $g^k$ moves every curve at least curve complex distance $3$.\\

A symmetric argument shows that $r_2$ also passes through the curve $c$.\\
\end{proof}

\textbf{Existence of $K$ minimising geodesics.} Let $v_1$ be a vertex on $r_1$ containing $c$, and let $v_2$ be a vertex on $r_2$ containing $c$. Since the mapping class group acts on $C_{P}(S)$ by isometry, the mapping class $T_{c}^{-n}$, i.e. the inverse of $n$ Dehn twists around the simple curve $c$, takes a geodesic segment to a geodesic segment. The curves in the pants decompositions represented by $v_1$ and $v_2$ are all either disjoint from $c$ or $c$ itself, so both $v_1$ and $v_2$ are fixed by $T_{c}^{-n}$. Let $\gamma^{'}$ be the geodesic constructed by replacing the geodesic segment connecting $v_1$ and $v_2$ by its image under $T_{c}^{-n}$. Starting with a geodesic in $\mathcal{F}(b,e)$, by untwisting all the large subsurface projections in this way, a geodesic in $\mathcal{F}_{K}(b,e)$ is obtained for some $K$.\\

\textbf{Bounded combinatorics implies bounded length.} Now let $\gamma$ be a geodesic in $\mathcal{F}_{K}(b,e)$. From the argument in the previous paragraph, it can be assumed without loss of generality that a directed geodesic segment $\delta$ connecting a vertex on $\gamma$ to a nearest vertex on $Q$ does not perform any elementary moves that introduce more than $N$ twists. Let $l_{c}$ be an upper bound for the width of the collar of a curve on the Teichm\"uller quasigeodesic corresponding to $\tilde{M}$, and recall that $Q$ passes through curves whose length are all bounded from above by $2L$. Since $\gamma$ is contained within a bounded radius $D(\gamma)$ of $Q$, an upper bound on the length of curves can be obtained by iterating, as follows: for the first vertex on $\delta$, a bound of $b_{1}:=2L+4NL+2l_{c}=2(N+1)L+2l_{c}$ is obtained. For the nth vertex, $b_{n}=2(N+1)b_{n-1}+2l_{c}$.\\


The existence of an axis $\gamma_{a}$ invariant under some power $m$ of $g$ now follows directly from the argument in \cite{Bowditch}.\\

The stable length of $g$ is then equal to the rational number
\begin{equation*}
\frac{d(p, g^{m}p)}{m}
\end{equation*}
for any vertex $p$ on $\gamma_{a}$.

\end{proof}

\bibliographystyle{plain}
\bibliography{pantbib}

\begin{thebibliography}{10}

\bibitem{BDM}
J.~Behrstock, C.~Drutu, and L.~Moscher.
\newblock Thick metric spaces, relative hyperbolicity, and quasi-isometric
  rigidity.
\newblock {\em Mathematische Annalen}, 344:543.

\bibitem{Tight}
B.~Bowditch.
\newblock Length bounds on curves arising from tight geodesics.
\newblock {\em {G}eom. {F}unct. {A}nal.}, 17:1001, 2007.

\bibitem{Bowditch}
B.~Bowditch.
\newblock Tight geodesics in the curve complex.
\newblock {\em Invent. math.}, 171:281--300, 2008.

\bibitem{Quasi}
J.~Brock.
\newblock The {W}eil-{P}etersson metric and volumes of 3-dimensional hyperbolic
  convex cores.
\newblock {\em J. {A}mer. {M}ath. {S}oc.}, 16:495, 2003.

\bibitem{BMM}
{J}. {B}rock, {H}. {M}asur, and {Y}. {M}insky.
\newblock {A}symptotics of {W}eil-{P}etersson geodesics {II}: bounded geometry
  and unbounded entropy.
\newblock {\em {G}eom. and {F}unct. {A}nal.}, 21:820, 2011.

\bibitem{conversation}
D.~Margalit.
\newblock personal communication, 2013.

\bibitem{MasurandMinskyI}
H.~Masur and Y.~Minsky.
\newblock Geometry of the complex of curves {I}: Hyperbolicity.
\newblock {\em Invent. Math.}, 138:103--149, 1999.

\bibitem{MasurandMinskyII}
H.~Masur and Y.~Minsky.
\newblock Geometry of the complex of curves {II}: Hierarchical {S}tructure.
\newblock {\em Geometric and Functional Analysis}, 10, 2000.

\bibitem{Minsky}
{Y}. {M}insky.
\newblock The classification of {K}leinian surface groups, {I}: models and
  bounds.
\newblock {\em {A}nnals of {M}ath.}, page~1, 2010.

\bibitem{Thurston}
Thurston.
\newblock On the geometry and dynamics of diffeomorphisms of surfaces.
\newblock {\em {B}ulletin of the {A}merican {M}athematical {S}ociety}, 19:417,
  1988.

\end{thebibliography}
\end{document}